\newtheorem{theorem}{Theorem}[section]
\numberwithin{equation}{section}
\newtheorem{corollary}{Corollary}[section]
\newtheorem{remark}{Remark}[section]
\newtheorem*{theorem-non}{Theorem}
\newtheorem{fact}{Fact}[section]
\title{On the Law of Large Numbers and Convergence Rates for the Random Projections
}
\author{
  Vishakha \\
  Department of Mathematical Sciences \\
  University of Cincinnati \\
  Cincinnati\\
  \texttt{sharmav4@mail.uc.edu} \\
}
\begin{document}
\maketitle

\begin{abstract}
The aim of this paper is to establish the Marcinkiewicz-Zygmund (MZ) type law of large numbers for the randomly weighted sums with weights chosen randomly, uniformly over the unit sphere in $\mathbb{R}^n$. We also establish a theorem that describes the rate of convergence in the law of large numbers for these weighted sums.
\end{abstract}

\keywords{ Law of large numbers \and Rate of convergence \and Weighted sums \and Projections
}

\section{Introduction}

Let $(X_k)_{k \in \mathbb{Z}_+}$ be a sequence of real-valued random variables. For $n \in \mathbb{Z}_+$, let  $S_{n}(\Tilde{\theta}_n)$ denote the weighted sums
$$S_{n}(\Tilde{\theta}_n)=\theta_{n,1}X_{1}+\cdots+\theta_{n,n}X_{n},$$
where a random vector $\Tilde{\theta}_n=(\theta_{n,1},\dots,\theta_{n,n})$ is distributed uniformly on the unit sphere $\mathbb{S}^{n-1}$ of ${\mathbb{R}}^{n}$ ($n\geq2$), and 
is independent of $(X_1, \dots,X_n)$.
Let $P_{\Tilde{\theta}_n}$ 
be the unique rotationally
invariant probability measure on $\mathbb{S}^{n-1}$, referred to as the uniform distribution on the sphere. Throughout this paper, a random vector is said to be distributed uniformly on the sphere if it is distributed according to $P_{\Tilde{\theta}_n}$. \\

The above partial sums, $S_{n}(\Tilde{\theta}_n)$, can be viewed as one-dimensional random projections of a vector $(X_1, \dots,X_n)$ in the direction of $\Tilde{\theta}_n$. 
One of the primary applications of random projections is to reduce the dimensionality of data by projecting it onto a lower-dimensional space, thereby improving the efficiency of statistical computations, machine learning algorithms, and data visualization techniques.
The power of random projection comes from the Johnson-Lindenstrauss lemma \cite{MR737400} which guarantees that the distances between data points are approximately preserved even after being projected onto a random lower-dimensional space. This property of random projections is particularly useful for problems concerning the metric structure of data, such as clustering and nearest-neighbor search. 
Thus, random projections are very widely used and studied in the field of statistics and data analysis. This serves as the general motivation behind studying the weighted sums, $S_{n}(\Tilde{\theta}_n)$. \\

There has been significant work developed in this direction by Diaconis and Freedman \cite{MR751274}, Klartag and Sodin \cite{MR3136463}, Bobkov \emph{et.al.} 
 \cite{MR3858920} \cite{MR4112712}, Sudakov \cite{zbMATH03647821}, Dedecker \emph{et.al.} \cite{10.1214/24-AAP2121}, among others. 
Their work is primarily centered around investigating problems related to the distributions and deriving the estimates for the random projections, when the variables assume at least finite second moments. 
The natural question is then to ask whether we have law of large numbers for the random projections if moments lower than second moment are assumed. The goal of this paper is to bridge this gap. Under weaker conditions than identical distribution, we establish MZ type law of large numbers and $L^p$ convergence results when random variables have finite absolute $p$th moment, where $0< p < 2$. We also study the rate of convergence in the law of large numbers.\\

This paper is organized as follows. In Section \ref{S2}. we introduce notations and facts that are used throughout this paper. In Section \ref{S3}, we state our main results. Finally, in Section \ref{S4}, we present the proofs.

\section{Notations and Facts}\label{S2}

Let $(X_k)_{k \in \mathbb{Z}_+}$ be a sequence of real-valued random variables defined on a probability space $(\Omega,\mathcal{K},P_X)$.
Let $n\in \mathbb{Z}_+$ and $\xi_{n,1},\dots,\xi_{n,n}$ be i.i.d. standard normal random variables such that 
$\Tilde{\xi}_n=(\xi_{n,1},\dots,\xi_{n,n})$ is independent of $\Tilde{\theta}_n=(\theta_{n,1},\dots,\theta_{n,n})$ and $(X_1,\dots,X_n)$. Let $Z_{n}$ be a positive random variable such that $Z_{n}^{2}$ is chi-square with $n$ degrees of freedom, independent of the $\xi_{n,i}$'s, $\theta_{n,i}$'s, and $X_i$'s $(1\leq i \leq n)$. 
Observe that $\Tilde{\xi}_n$ has the same joint distribution as  $Z_n \Tilde{\theta}_n=(Z_n\theta_{n,1},\dots,Z_n\theta_{n,n})$ (see Appendix Fact \ref{R2}).\\

Throughout this paper, we construct all variables independent used on a corresponding probability space.
For ease of notation, for any random variables $W$ and $T$, we use $P_{W,T}$ for the product probability $P_W \times P_T$, and $E_{W,T}$ for the expectation w.r.t. the corresponding product space. Also, for a fixed $n$, $E_{\Tilde{\theta}_n}$ denotes the expectation w.r.t. $P_{\Tilde{\theta}_n}$.   We drop the subscripts wherever it is obvious.\\

Finally, $A \ll B$ means that $A\leq c B$ for some positive constant $c$ depending only on $p$ or $r$, which appear in Theorems \ref{T1}, \ref{T3}, and \ref{T4}.

\section{Main Results} \label{S3}

Let $(X_k)_{k \in \mathbb{Z}_+}$ be mean dominated by a random variable $Y$ in the sense that, for some $M>0$,  
\begin{equation}
\frac{1}{n}\sum_{k=1}^n P(|X_k|>x) \leq M P(|Y|>x), \quad \text{ for all } x>0 \text{ and } n\in \mathbb{Z}_+. \tag{MD}    
\end{equation}

Our first result establishes the convergence in probability and in $L^1$ for the random projections.

\begin{theorem}\label{T1}
Let  $0<p<2$. If $E|Y|^p < \infty$, then
$$n^{p/2-1} \cdot E_{\Tilde{\theta}_n} |S_n(\Tilde{\theta}_n)|^p \rightarrow 0 \quad \text{ in } P_X \text{ and in } L^1(\Omega) \quad \text{ as } \quad n \rightarrow \infty.$$ 
\end{theorem}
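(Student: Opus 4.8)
The plan is to reduce $E_{\Tilde{\theta}_n}|S_n(\Tilde{\theta}_n)|^p$ to an explicit multiple of $\bigl(\sum_{k=1}^n X_k^2\bigr)^{p/2}$ and then to control that quantity by truncation. Conditioning on $(X_1,\dots,X_n)$ and using that $P_{\Tilde{\theta}_n}$ is rotationally invariant and independent of the $X_k$'s, the projection $S_n(\Tilde{\theta}_n)=\langle\Tilde{\theta}_n,(X_1,\dots,X_n)\rangle$ has, given $(X_1,\dots,X_n)=x$, the same law as $|x|\,\theta_{n,1}$, so that
$$E_{\Tilde{\theta}_n}|S_n(\Tilde{\theta}_n)|^p=c_{n,p}\Bigl(\sum_{k=1}^n X_k^2\Bigr)^{p/2},\qquad c_{n,p}:=E_{\Tilde{\theta}_n}|\theta_{n,1}|^p.$$
Let $g$ be standard normal. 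From $\Tilde{\xi}_n\stackrel{d}{=}Z_n\Tilde{\theta}_n$ with $Z_n$ independent of $\Tilde{\theta}_n$ (Fact \ref{R2}) I get $E|g|^p=E|\xi_{n,1}|^p=E Z_n^p\cdot c_{n,p}$, and since $Z_n^2$ is $\chi^2_n$, $E Z_n^p=2^{p/2}\Gamma\!\bigl(\tfrac{n+p}{2}\bigr)/\Gamma\!\bigl(\tfrac n2\bigr)$; Stirling's formula then gives $n^{p/2}c_{n,p}\to E|g|^p<\infty$, so $n^{p/2-1}c_{n,p}\ll 1/n$. Consequently it suffices to prove $\tfrac1n E_{P_X}\bigl(\sum_{k=1}^n X_k^2\bigr)^{p/2}\to 0$: since $n^{p/2-1}E_{\Tilde{\theta}_n}|S_n(\Tilde{\theta}_n)|^p$ is nonnegative and a function of $(X_1,\dots,X_n)$ only, this single bound yields both the $L^1(\Omega)$ convergence and, a fortiori, the convergence in probability.

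For that estimate I would truncate at level $n^{1/p}$: writing $X_k=X_k\mathbf{1}_{\{|X_k|\le n^{1/p}\}}+X_k\mathbf{1}_{\{|X_k|>n^{1/p}\}}$, the two summands have disjoint supports, so $\sum_k X_k^2=U_n+V_n$ with $U_n=\sum_k X_k^2\mathbf{1}_{\{|X_k|\le n^{1/p}\}}$ and $V_n=\sum_k X_k^2\mathbf{1}_{\{|X_k|>n^{1/p}\}}$; since $0<p/2<1$, subadditivity of $t\mapsto t^{p/2}$ gives $\bigl(\sum_k X_k^2\bigr)^{p/2}\le U_n^{p/2}+V_n^{p/2}$. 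For $V_n$ the same subadditivity gives $V_n^{p/2}\le\sum_k|X_k|^p\mathbf{1}_{\{|X_k|>n^{1/p}\}}$, and a layer-cake computation combined with (MD) yields $E V_n^{p/2}\le nM\,E\bigl[|Y|^p\mathbf{1}_{\{|Y|>n^{1/p}\}}\bigr]=n\cdot o(1)$ by dominated convergence, using $E|Y|^p<\infty$. For $U_n$, concavity of $t\mapsto t^{p/2}$ and Jensen's inequality give $E U_n^{p/2}\le\bigl(\sum_k E\bigl[X_k^2\mathbf{1}_{\{|X_k|\le n^{1/p}\}}\bigr]\bigr)^{p/2}$, and (MD) gives $\sum_k E\bigl[X_k^2\mathbf{1}_{\{|X_k|\le n^{1/p}\}}\bigr]\le nM\int_0^{n^{1/p}}2x\,P(|Y|>x)\,dx$. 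The elementary fact that $E|Y|^p<\infty$ with $0<p<2$ forces $\int_0^t 2x\,P(|Y|>x)\,dx=o(t^{2-p})$ as $t\to\infty$ (split the integral at a large fixed $T$ and bound $x^{2-p}$ by $t^{2-p}$ on $[T,t]$) then gives $\sum_k E\bigl[X_k^2\mathbf{1}_{\{|X_k|\le n^{1/p}\}}\bigr]=o(n^{2/p})$, so $E U_n^{p/2}=o(n)$. Adding the two bounds, $E_{P_X}\bigl(\sum_k X_k^2\bigr)^{p/2}=o(n)$, as required.

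The genuinely delicate point is the case where the $X_k$ fail to have finite second moment, so that $\tfrac1n\sum_k X_k^2$ may diverge: there the naive bound $\bigl(\sum_k X_k^2\bigr)^{p/2}\le\sum_k|X_k|^p$ only shows that $n^{p/2-1}E_{\Tilde{\theta}_n}|S_n(\Tilde{\theta}_n)|^p$ is bounded in $L^1$, not that it tends to $0$. It is precisely the two-sided truncation at $n^{1/p}$, combined with the refinement $\int_0^t 2x\,P(|Y|>x)\,dx=o(t^{2-p})$ of the finite-$p$th-moment bound, that supplies the extra factor $o(1)$. The rotational-invariance reduction, the layer-cake identities, and the Gamma-function asymptotics are routine.
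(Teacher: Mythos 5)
Your proof is correct, but it reorganizes the argument in a way that differs meaningfully from the paper's. The paper keeps the Gaussian randomization throughout: it replaces $\Tilde{\theta}_n$ by $\Tilde{\xi}_n/Z_n$, reduces to showing $n^{-1}E_{X,\Tilde{\xi}_n}|S_n(\Tilde{\xi}_n)|^p\to 0$, truncates at the \emph{small} level $n^{\alpha}$ with $\alpha\in(0,1/p-1/2)$, bounds the truncated part crudely by $n^{p\alpha+p/2}$, and handles the tail part via a $p$th moment inequality for the (conditionally) independent summands $\xi_{n,k}X_k''$. You instead exploit rotational invariance to get the exact identity $E_{\Tilde{\theta}_n}|S_n(\Tilde{\theta}_n)|^p=c_{n,p}\bigl(\sum_k X_k^2\bigr)^{p/2}$, using the Gaussian picture only to compute the asymptotics $n^{p/2}c_{n,p}\to E|g|^p$; after that, everything is a deterministic statement about $\ell^2$ norms, and the tail part needs only subadditivity of $t\mapsto t^{p/2}$ rather than any moment inequality for sums of independent variables. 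The trade-off is in the truncated part: your truncation level $n^{1/p}$ is too large for the crude second-moment bound (it would give $n^{p/2}$ after normalization), so you genuinely need the refinement $\int_0^t 2x\,P(|Y|>x)\,dx=o(t^{2-p})$, which you correctly derive from $x^pP(|Y|>x)\to 0$; the paper sidesteps this by choosing the truncation level small enough that the crude bound already decays. Both arguments are complete; yours is arguably cleaner in isolating the probabilistic content (the spherical identity plus $c_{n,p}\sim E|g|^p n^{-p/2}$) from the analytic content (the $o(n)$ bound on $E_X\bigl(\sum_k X_k^2\bigr)^{p/2}$), and it correctly notes that the single $L^1$ bound delivers both modes of convergence claimed in the statement.
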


\begin{remark}\label{T1R2}
Under the assumptions of Theorem \ref{T1}, and an application of Markov's inequality implies that for every $\varepsilon>0$,
$$\lim_{n\rightarrow \infty} P_{X,\Tilde{\theta}_n} \left(|S_n(\Tilde{\theta}_n)| > \varepsilon n^{1/p-1/2}\right) =0.$$
\end{remark}

\noindent The following theorem describes the rate of convergence.

\begin{theorem}\label{T3}
Let $1\leq p<2$ and $1\leq r\leq p$. 
If $E|Y|^p< \infty$, then for every $\varepsilon >0$
$$\sum_{n=1}^{\infty}n^{p/r-2} \cdot P_{X}\left(\max_{1\leq k\leq n} k^{1/2}\cdot E_{\Tilde{\theta}_k}|S_k(\Tilde{\theta}_k)|>\varepsilon n^{1/r}\right) < \infty.$$   
\end{theorem}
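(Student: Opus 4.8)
First I would remove the projection. Conditionally on $(X_1,\dots,X_k)$, rotational invariance of $P_{\tilde\theta_k}$ gives $S_k(\tilde\theta_k)\stackrel{d}{=}\big(\sum_{i\le k}X_i^2\big)^{1/2}\theta_{k,1}$, and since $Z_k=|\tilde\xi_k|$ and $\tilde\theta_k=\tilde\xi_k/|\tilde\xi_k|$ are independent (Fact~\ref{R2}), $E|\xi_{k,1}|=E Z_k\cdot E|\theta_{k,1}|$. Hence
\[
k^{1/2}\,E_{\tilde\theta_k}\big|S_k(\tilde\theta_k)\big|=c_k\Big(\sum_{i=1}^k X_i^2\Big)^{1/2},\qquad c_k:=\frac{k^{1/2}\,E|\xi_{k,1}|}{E Z_k}=\frac{k^{1/2}\,\Gamma(\tfrac k2)}{\sqrt\pi\,\Gamma(\tfrac{k+1}{2})},
\]
and a routine Gamma estimate gives $0<\inf_k c_k\le\sup_k c_k<\infty$ (indeed $c_1=1$ and $c_k\to\sqrt{2/\pi}$). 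Since $k\mapsto\sum_{i\le k}X_i^2$ is nondecreasing, $\max_{1\le k\le n}k^{1/2}E_{\tilde\theta_k}|S_k(\tilde\theta_k)|\le(\sup_j c_j)\big(\sum_{i\le n}X_i^2\big)^{1/2}$, so it suffices to prove the Baum--Katz type bound $\sum_n n^{p/r-2}\,P_X\big(\sum_{i\le n}X_i^2>\delta n^{2/r}\big)<\infty$ for every $\delta>0$.

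\textbf{Decomposition and the large-value part.} Write $W_i:=X_i^2$ (independent by assumption, mean dominated by $V:=Y^2$ by (MD)), $q:=p/2\in[\tfrac12,1)$ and $\beta:=2/r\in(1,2]$; then $E V^{q}=E|Y|^{p}<\infty$ and $q\beta=p/r\ge1$. For $n$ large, decompose
\[
P\Big(\sum_{i\le n}W_i>\delta n^{\beta}\Big)\le\sum_{i\le n}P\big(W_i>\tfrac\delta2 n^{\beta}\big)+P(B_n),\qquad B_n:=\Big\{\textstyle\sum_{i\le n}W_i\mathbf{1}(W_i\le\tfrac\delta2 n^{\beta})>\delta n^{\beta}\Big\},
\]
which is valid since on $\{\sum W_i>\delta n^\beta\}\setminus B_n$ some $W_i$ must exceed $\tfrac\delta2 n^\beta$. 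The first sum is $\le nM\,P(V>\tfrac\delta2 n^{\beta})$, so by the standard identity $\sum_n n^{c-1}P(T>n)\asymp E T^{c}$ (with $c=q\beta$, $T=(2V/\delta)^{1/\beta}$),
\[
\sum_n n^{p/r-2}\sum_{i\le n}P\big(W_i>\tfrac\delta2 n^\beta\big)\le M\sum_n n^{q\beta-1}P\big((2V/\delta)^{1/\beta}>n\big)\ll E V^{q}<\infty .
\]

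\textbf{The truncated part.} For $P(B_n)$ I would use an exponential (Bennett-type) bound. With $y_n:=\tfrac\delta2 n^\beta$ and $\phi(t):=\int_0^t P(V>s)\,ds$, convexity gives $e^{\lambda w}-1\le\frac{e^{\lambda y_n}-1}{y_n}w$ on $[0,y_n]$, (MD) gives $\sum_{i\le n}E[W_i\mathbf{1}(W_i\le y_n)]\le nM\phi(y_n)$, and independence of the $W_i$ yields $P(B_n)\le\exp\big((e^{\lambda y_n}-1)\psi_n-\lambda\delta n^\beta\big)$ with $\psi_n:=nM\phi(y_n)/y_n=\tfrac{2M}{\delta}n^{1-\beta}\phi(\tfrac\delta2 n^\beta)$. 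Because $E V^{q}<\infty$ with $q<1$ forces $t^{q}P(V>t)\to0$ and hence $\phi(t)=o(t^{1-q})$, one gets $\psi_n=o(n^{1-q\beta})\to0$; taking $\lambda y_n=\tfrac12\log(1/\psi_n)$ then gives $P(B_n)\ll\psi_n$ for all large $n$. Finally $n^{p/r-2}\psi_n\ll n^{q\beta-1-\beta}\phi(\tfrac\delta2 n^\beta)$, and since $q<1$ the tail sum $\sum_{n>N}n^{q\beta-1-\beta}\asymp N^{\beta(q-1)}$ converges, so Fubini gives
\[
\sum_n n^{q\beta-1-\beta}\phi\big(\tfrac\delta2 n^\beta\big)=\int_0^\infty P(V>t)\!\!\sum_{n>(2t/\delta)^{1/\beta}}\!\!n^{q\beta-1-\beta}\,dt\ll\int_0^\infty P(V>t)\big(1\wedge t^{\,q-1}\big)\,dt\ll 1+E V^{q}<\infty,
\]
which, combined with the previous paragraph, completes the proof.

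\textbf{The main obstacle.} The genuinely delicate case is the diagonal $p=r$, equivalently $q\beta=1$: there the naive route (truncate at $n^\beta$, centre, and bound via a Rosenthal or Chebyshev moment inequality) provably fails, since after normalisation every crude contribution has the non-summable order $n^{-1}$. The exponential estimate circumvents this only because it is kept sharp in the slowly-varying part --- it uses the exact $\phi(y_n)=o(y_n^{1-q})$ rather than the crude $O(y_n^{1-q})$, together with the slowly growing $\lambda y_n=\tfrac12\log(1/\psi_n)$ --- so that the Fubini step returns precisely the convergent integral $\int_0^\infty P(V>t)t^{q-1}\,dt\asymp E|Y|^{p}$. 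By contrast, the Gamma asymptotics for $c_k$, the transfer of (MD) from $X_i$ to $W_i=X_i^2$, and the identity $\sum_n n^{c-1}P(T>n)\asymp E T^{c}$ are all routine.
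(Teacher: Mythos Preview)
Your reduction to $\sum_{i\le n}X_i^2$ is correct and matches the paper's approach in spirit: the paper also passes through the Gaussian weights and Wendel's bounds to arrive at essentially the same quantity, since conditionally on the $X$'s one has $E_{\tilde\xi_k}|S_k(\tilde\xi_k)|=\sqrt{2/\pi}\,(\sum_{i\le k}X_i^2)^{1/2}$. Your observation that the maximum over $k$ collapses to $k=n$ is a clean simplification the paper does not make explicit.

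There is, however, a genuine gap: you write that the $W_i=X_i^2$ are ``independent by assumption'', but the paper assumes \emph{only} the mean-domination condition (MD) on the sequence $(X_k)$, not independence. Your Bennett-type exponential bound for $P(B_n)$ uses the factorisation $E\exp\big(\lambda\sum_i W_i\mathbf 1(W_i\le y_n)\big)=\prod_i E\exp\big(\lambda W_i\mathbf 1(W_i\le y_n)\big)$, which is unjustified without independence. Fortunately the fix is immediate and makes the entire exponential detour unnecessary: plain Markov already gives
\[
P(B_n)\le\frac{1}{\delta n^{\beta}}\sum_{i\le n}E\big[W_i\mathbf 1(W_i\le y_n)\big]\le\frac{nM\phi(y_n)}{\delta n^{\beta}}=\tfrac12\psi_n,
\]
using only linearity of expectation and (MD). Your Fubini computation for $\sum_n n^{q\beta-1-\beta}\phi(y_n)$ then goes through unchanged.

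This also means your diagnosis under ``The main obstacle'' is off: the elementary Markov-plus-truncation route does \emph{not} fail at $p=r$, precisely because the Fubini step returns the sharp $E|Y|^{p}$ rather than a crude $O$-bound. Indeed the paper follows exactly this naive path: it truncates the $X_k$ at level $n^{1/r}$, bounds the small part by $(\sum_k|X_k'|^2)^{1/2}$ and the large part by $\sum_k|X_k''|$, applies Markov to each, and then invokes a standard lemma (Lemma~3.2 of \cite{MR3567927}) for the resulting series --- which is the computation you carry out by hand. Once the unnecessary exponential step is removed, your argument is a somewhat cleaner, fully self-contained variant of the paper's own proof.
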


\noindent As a consequence of this theorem, we achieve the following almost sure convergence.

\begin{corollary}\label{T3C1}
Under the assumptions of \textup{Theorem \ref{T3}}, 
$$n^{1/2-1/p} \cdot E_{\Tilde{\theta}_n}|S_n(\Tilde{\theta}_n)| \rightarrow 0 \quad P_{X}-\text{a.s.}  \quad \text{ as } \quad n \rightarrow \infty.$$   
\end{corollary}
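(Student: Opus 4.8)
The strategy is the standard route from a Baum–Katz–type series to almost sure convergence of the relevant normalized quantity. Set $T_k := k^{1/2}\cdot E_{\Tilde{\theta}_k}|S_k(\Tilde{\theta}_k)|$, which is a (nonnegative) function of $(X_1,\dots,X_k)$ only, and note that the claim $n^{1/2-1/p}\cdot E_{\Tilde{\theta}_n}|S_n(\Tilde{\theta}_n)|\to 0$ $P_X$-a.s.\ is exactly $T_n/n^{1/p}\to 0$ $P_X$-a.s. I would apply Theorem \ref{T3} in the special case $r=p$ (which is allowed since $1\le r\le p$): this gives $\sum_{n\ge1} n^{-1}\,P_X\big(\max_{1\le k\le n} T_k > \varepsilon n^{1/p}\big) < \infty$ for every $\varepsilon>0$.

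The remaining step is a routine "summability along a doubling subsequence" argument. First I would pass to the dyadic blocks: for $2^{m}\le n < 2^{m+1}$ the summand $n^{-1}P_X(\max_{k\le n}T_k>\varepsilon n^{1/p})$ dominates (up to the constant $\tfrac12$ in the $n^{-1}$ factor and monotonicity of $n\mapsto\max_{k\le n}T_k$ and of $n\mapsto n^{1/p}$) $\tfrac12\,P_X(\max_{k\le 2^m}T_k > \varepsilon 2^{(m+1)/p})$; summing over $n$ in each block and then over $m$ shows $\sum_{m\ge0} P_X\big(\max_{1\le k\le 2^m}T_k > \varepsilon\, 2^{(m+1)/p}\big)<\infty$. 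By Borel–Cantelli, for each fixed $\varepsilon>0$, $P_X$-almost surely one has $\max_{1\le k\le 2^m}T_k \le \varepsilon\,2^{(m+1)/p}$ for all large $m$. Now for arbitrary large $n$, choosing $m$ with $2^m\le n<2^{m+1}$, we get $T_n \le \max_{1\le k\le 2^{m+1}}T_k \le \varepsilon\,2^{(m+2)/p} = \varepsilon\,2^{2/p}\,2^{m/p}\le \varepsilon\,2^{2/p}\,n^{1/p}$, so $\limsup_n T_n/n^{1/p}\le 2^{2/p}\varepsilon$ a.s. Letting $\varepsilon\downarrow 0$ through a countable sequence yields $T_n/n^{1/p}\to 0$ $P_X$-a.s., which is the assertion.

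There is essentially no serious obstacle here, since Theorem \ref{T3} is doing all the heavy lifting; the only points requiring a little care are (i) invoking Theorem \ref{T3} with $r=p$ so that the exponent $p/r-2$ becomes $-1$ and the threshold exponent $1/r$ becomes $1/p$, matching the target normalization, and (ii) the bookkeeping in the dyadic comparison, where one must be careful that the $\max$ over $k\le n$ inside the probability is monotone in $n$ so that the block lower bound is valid. If one prefers to avoid dyadic blocks entirely, an alternative is to note directly that $\sum_n n^{-1}P_X(\max_{k\le n}T_k>\varepsilon n^{1/p})<\infty$ forces $P_X(\max_{k\le n}T_k>\varepsilon n^{1/p}\ \text{i.o.})=0$ along any subsequence $n_j$ with $\sum_j 1/n_j<\infty$ that is also "dense enough" multiplicatively (e.g.\ $n_j=\lceil j^{1+\delta}\rceil$ for any $\delta>0$ would not work, but $n_j = 2^j$ does), which is again the dyadic argument; I would therefore present the dyadic version as above.
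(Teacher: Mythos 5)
Your proposal is correct and follows essentially the same route as the paper: both invoke Theorem \ref{T3} with $r=p$ and then pass from summability of $n^{-1}P_X\bigl(\max_{1\le k\le n}k^{1/2}E_{\Tilde{\theta}_k}|S_k(\Tilde{\theta}_k)|>\varepsilon n^{1/p}\bigr)$ to almost sure convergence via a dyadic-block comparison. The paper phrases the final step as showing the tail probability $P_X\bigl(\sup_{m\ge k}\,m^{1/2-1/p}E_{\Tilde{\theta}_m}|S_m(\Tilde{\theta}_m)|>\delta\bigr)\to 0$ rather than citing Borel--Cantelli, but this is only a cosmetic difference.
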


\begin{theorem}\label{T4}
Let $1\leq p<2$ and $1\leq r < p$. 
If $E|Y|^p< \infty$, then for every $\varepsilon >0$
$$\sum_{n=1}^{\infty}n^{p/r-2} \cdot P_{X}\left(\max_{1\leq k\leq n} k^{r/2}\cdot E_{\Tilde{\theta}_k}|S_k(\Tilde{\theta}_k)|^r>\varepsilon n\right) < \infty.$$   
If in addition, $E|Y|^p\log^+|Y|< \infty$, then the above statement is true for $r=p$.
\end{theorem}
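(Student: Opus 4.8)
The plan is to reduce the statement about $E_{\tilde\theta_k}|S_k(\tilde\theta_k)|^r$ to a Marcinkiewicz–Zygmund style complete convergence statement for ordinary sums of the mean-dominated variables $X_k$, and then invoke the known results in that setting. The first step is the Gaussian representation: since $\tilde\xi_n \stackrel{d}{=} Z_n\tilde\theta_n$ with $Z_n^2\sim\chi^2_n$ independent of everything, we have, conditionally on $(X_1,\dots,X_k)$,
$$
E_{\tilde\theta_k}|S_k(\tilde\theta_k)|^r \;=\; E\!\left[\,\Big|\sum_{i=1}^k \tfrac{\xi_{k,i}}{Z_k}X_i\Big|^r\,\right]
\;=\; E\!\big[Z_k^{-r}\big]\cdot E\!\left[\Big|\sum_{i=1}^k \xi_{k,i}X_i\Big|^r\,\Big|\,X\right].
$$
Using $E[Z_k^{-r}]\asymp k^{-r/2}$ (from properties of the chi distribution, a Fact one can cite), the factor $k^{r/2}$ in the statement exactly cancels this, so the claim becomes
$$
\sum_{n=1}^\infty n^{p/r-2}\,P_X\!\left(\max_{1\le k\le n} E\big[\,|\textstyle\sum_{i=1}^k \xi_{k,i}X_i|^r \,\big|\, X\,\big] > \varepsilon' n\right) < \infty.
$$

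Next I would handle the conditional Gaussian moment. Conditionally on $X$, $\sum_{i=1}^k \xi_{k,i}X_i$ is $N(0,\sum_{i=1}^k X_i^2)$, so $E[|\sum \xi_{k,i}X_i|^r\mid X] = c_r \big(\sum_{i=1}^k X_i^2\big)^{r/2}$ with $c_r = E|N(0,1)|^r$. Thus the event inside becomes $\max_{1\le k\le n}\big(\sum_{i=1}^k X_i^2\big)^{r/2} > \varepsilon'' n$, i.e. $\sum_{i=1}^n X_i^2 > (\varepsilon'' n)^{2/r} = \varepsilon''' n^{2/r}$ (the max is attained at $k=n$ since the partial sums of squares are nondecreasing). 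So everything reduces to
$$
\sum_{n=1}^\infty n^{p/r-2}\,P_X\!\left(\sum_{i=1}^n X_i^2 > \varepsilon''' n^{2/r}\right) < \infty,
$$
which is a complete-convergence / Baum–Katz type statement for the nonnegative variables $X_i^2$. Writing $q = p/2 \in [1/2,1)$ when $1\le p<2$... actually $q<1$, and the normalization exponent is $2/r > 2/p = 1/q$; setting $\alpha = 1/r$ and noting $\alpha p = p/r \ge 1$, with $\alpha > 1/q$ exactly when $r<p$, this is precisely the regime of the Baum–Katz theorem for i.i.d.-dominated summands $X_i^2$ having finite moment of order $q = p/2$. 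The mean-domination hypothesis (MD) transfers to $X_i^2$ being mean-dominated by $Y^2$, and $E|Y|^p<\infty$ gives $E(Y^2)^{p/2}<\infty$, which is exactly the moment condition needed for convergence of $\sum_n n^{p/r-2}P(\sum_{i\le n} X_i^2 > \varepsilon n^{2/r})$.

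The main obstacle is establishing the Baum–Katz complete-convergence bound for mean-dominated (rather than i.i.d.) summands, and in particular the boundary case $r=p$, where $\alpha = 1/q$ and the Baum–Katz series converges only under the extra logarithmic moment $E(Y^2)^{p/2}\log^+(Y^2)<\infty$, i.e. $E|Y|^p\log^+|Y|<\infty$ — matching the stated hypothesis. I would prove the required sum bound directly by the standard truncation argument: split $X_i^2 = X_i^2\mathbf 1_{\{X_i^2\le n^{2/r}\}} + X_i^2\mathbf 1_{\{X_i^2> n^{2/r}\}}$, control the tail part by summing $P(X_i^2>n^{2/r})$ against $n^{p/r-2}$ using (MD) and Fubini (this is where the log factor at $r=p$ enters via $\sum_n n^{p/r-2}P(|Y|^2>n^{2/r})$), bound the contribution of the truncated means using $E|Y|^p<\infty$ and the constraint $p/r\ge 1$, and finally apply a Markov/maximal bound to the centered truncated sum. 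Since $X_i^2\ge 0$, no second-moment or symmetrization machinery is needed; the argument is a clean one-sided truncation. Once this series bound is in hand, unwinding the reductions above (reinstating the constants $E[Z_k^{-r}]\asymp k^{-r/2}$ and $c_r$) yields the theorem, with the $r=p$ case following verbatim under the additional logarithmic hypothesis.
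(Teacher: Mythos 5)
Your proposal is correct in outline and takes a genuinely different (and in fact sharper) route than the paper's proof. The paper first truncates the $X_k$ at level $n^{1/r}$, then bounds the two Gaussian-weighted pieces separately: H\"older plus the exact variance computation for the truncated part, and the von Bahr--Esseen moment inequality $E|S_k''(\Tilde{\xi}_k)|^r \ll \sum_j E|X_j''|^r$ for the tail part; it is this last bound, fed into $\sum_n n^{p/r-2}E|Y|^rI(|Y|>n^{1/r})$, that forces the extra $\log^+$ moment at $r=p$. You instead exploit the \emph{exact} identity $E_{\Tilde{\xi}_k}|S_k(\Tilde{\xi}_k)|^r = c_r\bigl(\sum_{i\le k}X_i^2\bigr)^{r/2}$ for fixed $X$, valid with no truncation, which (together with the uniform boundedness of $k^{r/2}/E Z_k^r$) converts the whole theorem into the one-sided Baum--Katz statement $\sum_n n^{p/r-2}P_X\bigl(\sum_{i\le n}X_i^2>\varepsilon n^{2/r}\bigr)<\infty$ for the nonnegative, mean-dominated variables $X_i^2$. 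Carrying out your truncation of $X_i^2$ at $n^{2/r}$: the tail part is killed by $P\bigl(\sum_i X_i^2 I(X_i^2>n^{2/r})>0\bigr)\le \sum_i P(|X_i|>n^{1/r})\ll nP(|Y|>n^{1/r})$, whose weighted sum is $\asymp E|Y|^p$; the truncated part is killed by Markov on the mean plus Fubini, using only $p<2$ and $E|Y|^p<\infty$. So your reduction actually proves the $r=p$ case \emph{without} the logarithmic moment --- a genuine improvement over the stated theorem, obtained because the exact Gaussian moment is sharper than the von Bahr--Esseen bound.

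Two small corrections. First, the identity $E_{\Tilde{\theta}_k}|S_k(\Tilde{\theta}_k)|^r = E[Z_k^{-r}]\,E\bigl[|\sum_i\xi_{k,i}X_i|^r\mid X\bigr]$ is not right as written: $\Tilde{\theta}_k\overset{d}{=}\Tilde{\xi}_k/\Vert\Tilde{\xi}_k\Vert$, and the norm is not independent of the linear form, so the correct factor is $\bigl(E Z_k^r\bigr)^{-1}$ (obtained from $E|S_k(\Tilde{\xi}_k)|^r = E Z_k^r\cdot E|S_k(\Tilde{\theta}_k)|^r$, as in the paper), not $E[Z_k^{-r}]$. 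Both are $\asymp k^{-r/2}$, so your argument survives, but the equality should be fixed. Second, your diagnosis of where the log factor enters is off: the series $\sum_n n^{p/r-2}P(|Y|^2>n^{2/r})$ you point to equals $\sum_n n^{-1}P(|Y|^p>n)\asymp E\log^+|Y|^p$ at $r=p$, which is finite under $E|Y|^p<\infty$ alone; as noted above, no step of your reduction actually requires the logarithmic moment, so that hypothesis is simply unused in your proof of the $r=p$ case.
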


\begin{corollary}\label{T4C1}
Let $1< p<2$ and $E|Y|^p\log^+|Y|< \infty$, then
$$n^{p/2-1} \cdot E_{\Tilde{\theta}_n}|S_n(\Tilde{\theta}_n)|^p \rightarrow 0 \quad P_{X}-\text{a.s.}  \quad \text{ as } \quad n \rightarrow \infty.$$   
\end{corollary}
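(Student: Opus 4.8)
The plan is to deduce Corollary~\ref{T4C1} from Theorem~\ref{T4} by taking $r=p$ there, which is the case granted by the stronger hypothesis $E|Y|^p\log^+|Y|<\infty$. Set $T_n:=n^{p/2}\,E_{\tilde\theta_n}|S_n(\tilde\theta_n)|^p$ and $a_n:=\max_{1\le k\le n}T_k$. Since $E_{\tilde\theta_k}|S_k(\tilde\theta_k)|^p$ is obtained by integrating out $\tilde\theta_k$ (which is independent of $X_1,\dots,X_k$), each $T_k$ is a nonnegative $\sigma(X_1,\dots,X_k)$-measurable random variable, so $(a_n)_{n\ge1}$ is a nonnegative nondecreasing sequence of random variables on $(\Omega,\mathcal K,P_X)$. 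For $r=p$ the weight $n^{p/r-2}$ equals $n^{-1}$ and the event in Theorem~\ref{T4} is precisely $\{a_n>\varepsilon n\}$, so the theorem yields, for every $\varepsilon>0$,
$$\sum_{n=1}^{\infty}\frac1n\,P_X\!\left(a_n>\varepsilon n\right)<\infty.$$
Because $0\le n^{p/2-1}E_{\tilde\theta_n}|S_n(\tilde\theta_n)|^p=T_n/n\le a_n/n$, it suffices to show $a_n/n\to0$ $P_X$-a.s.

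The second step converts this logarithmically weighted series into an almost sure statement via a dyadic blocking argument that uses the monotonicity of $(a_n)$. If $2^j\le n<2^{j+1}$ then $a_n\ge a_{2^j}$ and $n<2\cdot2^j$, so $\{a_{2^j}>2\varepsilon\,2^{j}\}\subseteq\{a_n>\varepsilon n\}$ and hence $P_X(a_n>\varepsilon n)\ge P_X(a_{2^j}>2\varepsilon\,2^{j})$. Summing $n^{-1}$ over each block gives $\sum_{2^j\le n<2^{j+1}}n^{-1}\ge 2^j/2^{j+1}=\tfrac12$, so the finiteness of the series above forces $\sum_{j\ge1}P_X(a_{2^j}>\delta\,2^j)<\infty$ for every $\delta>0$. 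By Borel--Cantelli, for each fixed $\delta>0$ one has $a_{2^j}\le\delta\,2^j$ for all large $j$, $P_X$-a.s.; intersecting over $\delta=1/m$, $m\in\mathbb Z_+$, gives $a_{2^j}/2^j\to0$ $P_X$-a.s. Finally, for arbitrary $n$ pick $j$ with $2^j\le n<2^{j+1}$; then $a_n\le a_{2^{j+1}}$ and $n\ge2^j$, so $a_n/n\le a_{2^{j+1}}/2^{j}=2\,a_{2^{j+1}}/2^{j+1}\to0$ $P_X$-a.s. as $n\to\infty$, which completes the argument.

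I expect the only real obstacle to be this passage from the weighted series of probabilities to the almost sure bound: since $\sum_n1/n=\infty$, a direct Borel--Cantelli argument applied to the sequence itself is useless, and one must exploit the running-maximum structure of $a_n$ together with the dyadic decomposition. Verifying that $r=p$ is admissible under the $\log^+$-moment condition and that $T_n/n\le a_n/n$ are routine, so no further difficulties are anticipated.
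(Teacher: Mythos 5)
Your proof is correct and follows essentially the same route as the paper: take $r=p$ in Theorem \ref{T4} (admissible because of the $\log^+$-moment hypothesis) and then exploit the running-maximum structure through a dyadic blocking of the logarithmically weighted series. The only cosmetic difference is that you conclude via Borel--Cantelli applied to the dyadic subsequence $a_{2^j}/2^j$, whereas the paper bounds $P_X\bigl(\sup_{m\ge k} m^{p/2-1}E_{\Tilde{\theta}_m}|S_m(\Tilde{\theta}_m)|^p>\delta\bigr)$ directly by the tail of the same convergent series; the two formulations are equivalent.
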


\section{Proofs}\label{S4}

\begin{proof}[Proof of \textup{Theorem \ref{T1}}]

We begin by using the fact that $(Z_{n}\theta_{n,1},\dots,Z_{n}\theta_{n,n})$ has the same distribution as $(\xi_{n,1},\dots,\xi_{n,n})$, and observe that
$$E_{\Tilde{\xi}_n}|S_n(\Tilde{\xi}_n)|^p = E_{\Tilde{\theta}_n,Z_n}|Z_n S_n(\Tilde{\theta}_n)|^p
=E_{Z_n}|Z_n|^p \cdot E_{\Tilde{\theta}_n}|S_n(\Tilde{\theta}_n)|^p,$$
where the last equality is due to the independence between $Z_n$ and $S_n(\Tilde{\theta}_n)$. 
Moreover, isolating $E_{\Tilde{\theta}_n}|S_n(\Tilde{\theta}_n)|^p$ in above equality, and using independence between the variables and $X$, we obtain
\begin{align*}
E_{X,\Tilde{\theta}_n}|S_n(\Tilde{\theta}_n)|^p 
=E^{-1}_{Z_n}|Z_n|^p \cdot E_{X,\Tilde{\xi}_n}|S_n(\Tilde{\xi}_n)|^p.  
\end{align*}   
Also, since for any positive integer $n$, 
$E_{Z_n}|Z_n|^p = \frac{2^{p/2}\cdot \Gamma(\frac{n+p}{2})}{\Gamma(\frac{n}{2})}$ (see Appendix Fact \ref{R1}), we get
\begin{align}
~& n^{p/2-1} \cdot E_{X,\Tilde{\theta}_n}|S_n(\Tilde{\theta}_n)|^p
=n^{p/2-1} \cdot \left(\frac{2^{p/2}\cdot \Gamma(\frac{n+p}{2})}{\Gamma(\frac{n}{2})}\right)^{-1} \cdot E_{X,\Tilde{\xi}_n}|S_n(\Tilde{\xi}_n)|^p \notag\\
&=n^{-1}  \cdot  \left(\frac{\Gamma(\frac{n+p}{2})}{(\frac{n}{2})^{p/2} \cdot \Gamma(\frac{n}{2})}\right)^{-1} \cdot E_{X,\Tilde{\xi}_n}|S_n(\Tilde{\xi}_n)|^p. \label{E1.1}
\end{align}
Note that, we have a classical asymptotic relation  due to Wendel \cite{MR29448},
\begin{align}
\frac{\Gamma(\frac{n+p}{2})}{(\frac{n}{2})^{p/2} \cdot \Gamma(\frac{n}{2})} \rightarrow 1 \quad \text{ as } \quad n\rightarrow \infty,  \label{E1.2}  
\end{align}
so, it is enough to show that $n^{-1} \cdot E_{X,\Tilde{\xi}_n}|S_n(\Tilde{\xi}_n)|^p \rightarrow 0$ as $n \rightarrow \infty$.\\\\
Let $\alpha \in (0,1/p-1/2)$. We truncate random variables $\{X_1,\dots,X_n\}$ at  level $n^{\alpha}$, and define
$$X'_k=X_{k}I(|X_{k}|\leq n^{\alpha}), \quad X''_k=X_{k}I(|X_{k}|>n^{\alpha}), \quad 1\leq k \leq n.$$
Furthermore, denote partial sums 
$$S'_n(\Tilde{\xi}_n)=\sum_{k=1}^n \xi_{n,k} X'_k \quad \text{and} \quad S''_n(\Tilde{\xi}_n)=\sum_{k=1}^n \xi_{n,k} X''_k.$$
Clearly, $S_n(\Tilde{\xi}_n)=S'_n(\Tilde{\xi}_n)+S''_n(\Tilde{\xi}_n)$.\\\\
Note that, for $0<p<2$, 
$E_{X,\Tilde{\xi}_n}^{1/p}|S'_n(\Tilde{\xi}_n)|^p \leq E_{X,\Tilde{\xi}_n}^{1/2}|S'_n(\Tilde{\xi}_n)|^2$. Also, 
because for $\{X'_k\}$'s fixed, $S_n'(\Tilde{\xi}_n)$ is normally distributed with variance $(X'_{1})^{2}+\cdots+(X'_{n})^{2}$, we obtain
\begin{align*}
E_{X,\Tilde{\xi}_n}|S'_n(\Tilde{\xi}_n)|^p 
\leq E_{X,\Tilde{\xi}_n}^{p/2}|S'_n(\Tilde{\xi}_n)|^2
=  \left(\sum_{k=1}^n E_{X}(X'_{k})^2\right)^{p/2}
\leq n^{p\alpha +p/2}.
\end{align*}
Thus, 
\begin{align}
n^{-1} \cdot E_{X,\Tilde{\xi}_n}^{}|S'_n(\Tilde{\xi}_n)|^p
\leq n^{p\alpha +p/2-1} \rightarrow 0 \quad \text{ as } \quad n \rightarrow \infty.\label{E1.3}
\end{align}\\
Moreover, since for $\{X''_k\}$'s fixed, $\{\xi_{n,1}X_1'',\dots,\xi_{n,n}X_n''\}$ are independent,  we obtain
\begin{align*}
E_{X,\Tilde{\xi}_n}|S''_n(\Tilde{\xi}_n)|^p  
&\ll 
E_X\left[\sum_{k=1}^n|X''_{k}|^p\right]
\ll
n\cdot E \left[|Y|^pI(|Y|>n^{\alpha})\right], 
\end{align*}
where the last inequality follows by using (MD) along with the fact  
$E_X |X''_{k}|^p = n^{p\alpha} P(|X_k|>n^{\alpha}) + \int_{n^{p \alpha}}^{\infty} P(|X_k|^p>x) dx$. Also, since $E|Y|^p$ is finite,   $E \left[|Y|^pI(|Y|>n^{\alpha})\right] \rightarrow 0$ as $n \rightarrow \infty$. Therefore, we get
\begin{align}
n^{-1} \cdot E_{X,\Tilde{\xi}_n}|S''_n(\Tilde{\xi}_n)|^p \rightarrow 0 \quad \text{ as } \quad n \rightarrow \infty. \label{E1.4}   
\end{align}\\
Then, by combining the results in (\ref{E1.3}) and (\ref{E1.4}), we obtain
\begin{align}
n^{-1} \cdot E_{X,\Tilde{\xi}_n}|S_n(\Tilde{\xi}_n)|^p  \rightarrow 0 \quad \text{ as } \quad n \rightarrow \infty. \label{E1.5}
\end{align}\\
Finally, (\ref{E1.5}) and (\ref{E1.2}) combined together with (\ref{E1.1}) give
\begin{align*}
n^{p/2-1} \cdot E_{X,\Tilde{\theta}_n}|S_n(\Tilde{\theta}_n)|^p  \rightarrow 0 \quad \text{ as } \quad n \rightarrow \infty. 
\end{align*}
\end{proof}

\begin{proof}[Proof of \textup{Theorem \ref{T3}}]
We use the arguments similar to those for equation (\ref{E1.1}) in Theorem \ref{T1}, and  notice that for any positive integer $k$,   
$$E_{X,\Tilde{\theta}_k}|S_k(\Tilde{\theta}_k)|
=\left(\frac{2^{1/2}\cdot \Gamma(\frac{k+1}{2})}{\Gamma(\frac{k}{2})}\right)^{-1} \cdot E_{X,\Tilde{\xi}_k}|S_k(\Tilde{\xi}_k)|.$$
This implies that 
\begin{align}
k^{1/2} \cdot E_{X,\Tilde{\theta}_k}|S_k(\Tilde{\theta}_k)|
= \frac{k^{1/2} \cdot \Gamma(\frac{k}{2})}{2^{1/2}\cdot \Gamma(\frac{k+1}{2})} \cdot E_{X,\Tilde{\xi}_k}|S_k(\Tilde{\xi}_k)|. \label{T3E1}
\end{align}
From equation (7) in  Wendel \cite{MR29448}, we have
\begin{align}
\left(\frac{x}{x+a}\right)^{1-a} \leq \frac{\Gamma(x+a)}{x^a \cdot \Gamma(x)} \leq 1, \quad \forall x, 0<a<1.  \label{W2}
\end{align}
Taking $x=k/2$ and $a=1/2$ in this equation and doing basic algebra, we  get the following bounds 
\begin{align}
1 \leq \frac{k^{1/2} \cdot \Gamma(\frac{k}{2})}{2^{1/2}\cdot \Gamma(\frac{k+1}{2})} 
\leq 2^{1/2}.\label{T3E2}
\end{align}
Using the facts we achieved in (\ref{T3E1}) and (\ref{T3E2}), we obtain
\begin{align*}
P_{X}\left(\max_{1\leq k\leq n} k^{1/2}\cdot E_{\Tilde{\theta}_k}|S_k(\Tilde{\theta}_k)|>\varepsilon n^{1/r}\right) 
\ll 
P_{X}\left(\max_{1\leq k\leq n} E_{\Tilde{\theta}_k}|S_k(\Tilde{\theta}_k)|>\varepsilon\cdot 2^{-1/2} \cdot n^{1/r}\right). 
\end{align*}
Therefore, the proof reduces to showing that for any $\varepsilon >0$ $\ $
\[
\sum_{n=1}^{\infty}n^{p/r-2} \cdot P_{X}\left(\max_{1\leq k\leq n} E_{\Tilde{\xi}_k}|S_k(\Tilde{\xi}_k)|>\varepsilon  n^{1/r}\right)
\]
is finite. \\\\
We split this series into two truncated parts and prove that each part is finite.
We use truncation of variables $\{X_1,\dots,X_n\}$ at  level $n^{1/r}$, and define
$$X'_k=X_{k}I(|X_{k}|\leq n^{1/r}), \quad X''_k=X_{k}I(|X_{k}|>n^{1/r}), \quad 1\leq k \leq n.$$
Also, let 
$S'_n(\Tilde{\xi}_n)=\sum_{k=1}^n \xi_{n,k} X'_k$ and $S''_n(\Tilde{\xi}_n)=\sum_{k=1}^n \xi_{n,k} X''_k.$
Clearly, $S_n(\Tilde{\xi}_n)=S'_n(\Tilde{\xi}_n)+S''_n(\Tilde{\xi}_n)$.\\\\
H\"older's inequality gives
$E_{\Tilde{\xi}_k}|S'_k(\Tilde{\xi}_k)| \leq E_{\Tilde{\xi}_k}^{1/2}|S'_k(\Tilde{\xi}_k)|^2$. This is further bounded by $(\sum_{j=1}^{k} |X_j'|^2)^{1/2}$, because $S'_k(\Tilde{\xi}_k)$ is normally distributed with variance $\sum_{j=1}^{k} |X_j'|^2$ for fixed $\{X'_j\}$'s. These observations followed by Markov's inequality gives
\begin{align}
~&\sum_{n=1}^{\infty}n^{p/r-2} \cdot P_{X}\left(\max_{1\leq k\leq n} E_{\Tilde{\xi}_k}|S'_k(\Tilde{\xi}_k)|>\varepsilon  n^{1/r}\right)  \notag \\
&\leq 
\sum_{n=1}^{\infty}n^{p/r-2} \cdot P_{X}\left(\max_{1\leq k\leq n} \sqrt{\sum_{j=1}^{k} |X_j'|^2}>\varepsilon  n^{1/r}\right) \notag \\
&\leq 
\sum_{n=1}^{\infty}n^{p/r-2} \cdot P_{X}\left(\sum_{k=1}^{n} |X_k'|^2>\varepsilon^2  n^{2/r}\right) 
\ll
\sum_{n=1}^{\infty}n^{p/r-2} \cdot  n^{-2/r} \cdot \sum_{k=1}^{n} E_X|X_k'|^2. \label{T3E4}
\end{align}
Also,  by using triangle's inequality and  Markov's inequality, we have
\begin{align}
~&\sum_{n=1}^{\infty}n^{p/r-2} \cdot P_{X}\left(\max_{1\leq k\leq n} E_{\Tilde{\xi}_k}|S''_k(\Tilde{\xi}_k)|>\varepsilon  n^{1/r}\right)  \notag \\
&\ll
\sum_{n=1}^{\infty}n^{p/r-2} \cdot P_{X}\left(\sum_{k=1}^{n} |X_k''| > \varepsilon  n^{1/r}\right) \ll
\sum_{n=1}^{\infty} n^{p/r-2} \cdot n^{-1/r} \cdot \sum_{k=1}^n E_X |X_k''|. \label{T3E5}
\end{align}
Since $E|Y|^p$ is finite and we have (MD), both the series in (\ref{T3E4}) and (\ref{T3E5}) are finite due to Lemma 3.2 \cite{MR3567927}. \\\\
Thus, 
\begin{align*}
\sum_{n=1}^{\infty}n^{p/r-2} \cdot P_{X}\left(\max_{1\leq k\leq n} E_{\Tilde{\xi}_k}|S_k(\Tilde{\xi}_k)|>\varepsilon  n^{1/r}\right) < \infty,   
\end{align*}
which completes the proof.\\
\end{proof}

\begin{proof}[Proof of \textup{Corollary \ref{T3C1}}]
Letting $r=p$ in Theorem \ref{T3}, we notice that for any $\varepsilon>0$,
\begin{align}
\sum_{n=1}^{\infty}n^{-1} \cdot P_{X}\left(\max_{1\leq k\leq n} k^{1/2} \cdot E_{\Tilde{\theta}_k}|S_k(\Tilde{\theta}_k)|>\varepsilon  n^{1/p}\right) < \infty. \label{T3C1E1}
\end{align}
Observe that for any $\delta>0$ and $k\in \mathbb{Z}_+$,
\begin{align*}
P_{X}\left(\underset{m \geq k}{\sup}
\frac{E_{\Tilde{\theta}_m}|S_m(\Tilde{\theta}_m)|}{m^{1/p-1/2}}>\delta \right)
\leq 
\sum_{i>\log_2 k}P_{X}\left(\underset{1\leq m\leq 2^{i}}{\max} m^{1/2} \cdot E_{\Tilde{\theta}_m}|S_m(\Tilde{\theta}_m)| > 2^{(i-1)/p} \cdot \delta\right) \\
\leq 
\frac{1}{2}
\sum_{n>k}n^{-1} \cdot P_{X}\left(\underset{1 \leq m \leq n}{\max} m^{1/2} \cdot E_{\Tilde{\theta}_m}|S_m(\Tilde{\theta}_m)|> 2^{-2/p}\cdot  \delta \cdot n^{1/p}\right),
\end{align*}
where the last expression approaches $0$ as $k \rightarrow \infty$ by using (\ref{T3C1E1}). \\\\
Therefore, $P_{X}\left(\underset{m \geq k}{\sup}
\frac{E_{\Tilde{\theta}_m}|S_m(\Tilde{\theta}_m)|}{m^{1/p-1/2}}>\delta \right) \rightarrow 0$ as $k \rightarrow \infty$. This further implies that 
$$n^{1/2-1/p} \cdot E_{\Tilde{\theta}_n}|S_n(\Tilde{\theta}_n)| \xrightarrow[]{P_{X}-\text{a.s.}} 0 \quad \text{as } \quad  n \rightarrow \infty.$$
\end{proof}

\begin{proof}[Proof of \textup{Theorem \ref{T4}}]
The proof is essentially similar to that of Theorem \ref{T3}. So, most of the details will be skipped.\\\\
We start with the relation 
$$E_{X,\Tilde{\theta}_k}|S_k(\Tilde{\theta}_k)|^r
=\left(\frac{2^{r/2}\cdot \Gamma(\frac{k+r}{2})}{\Gamma(\frac{k}{2})}\right)^{-1} \cdot E_{X,\Tilde{\xi}_k}|S_k(\Tilde{\xi}_k)|^r.$$
Taking $x=k/2$ and $a=r/2$ in equation (\ref{W2}) and doing basic algebra, we  get the bounds
\begin{align*}
1 \leq \frac{k^{r/2} \cdot \Gamma(\frac{k}{2})}{2^{r/2}\cdot \Gamma(\frac{k+r}{2})} 
\leq (1+r)^{1-r/2}.
\end{align*}
Using the relation and the bounds we obtained above, we get
\begin{align*}
P_{X}\left(\max_{1\leq k\leq n} k^{r/2}\cdot E_{\Tilde{\theta}_k}|S_k(\Tilde{\theta}_k)|^r >\varepsilon n\right) 
\ll 
P_{X}\left(\max_{1\leq k\leq n} E_{\Tilde{\theta}_k}|S_k(\Tilde{\theta}_k)|^r \gg \varepsilon  n \right). 
\end{align*}
Therefore, we just need to show that for any $\varepsilon >0$ $\ $
\[
\sum_{n=1}^{\infty}n^{p/r-2} \cdot P_{X}\left(\max_{1\leq k\leq n} E_{\Tilde{\xi}_k}|S_k(\Tilde{\xi}_k)|^r>\varepsilon  n \right)
\]
is finite. \\\\
We use the same truncations as we did in Theorem \ref{T3}. \\\\
For the first part, we use H\"older's inequality to get
$E_{\Tilde{\xi}_k}|S'_k(\Tilde{\xi}_k)|^r \leq E_{\Tilde{\xi}_k}^{r/2}|S'_k(\Tilde{\xi}_k)|^2$, and then follow the series of arguments from Theorem \ref{T3} to obtain
\begin{align}
~&\sum_{n=1}^{\infty} n^{p/r-2} \cdot P_{X}\left(\max_{1\leq k\leq n} E_{\Tilde{\xi}_k}|S'_k(\Tilde{\xi}_k)|^r>\varepsilon  n \right)  
\ll
\sum_{n=1}^{\infty} n^{(p-2)/r-2}  \cdot \sum_{k=1}^{n} E_X|X_k'|^2. \label{T4E1}
\end{align}\\
For the second part, an application of $r$th moment inequality for the independent variables  $\{\xi_{n,k} X''_k\}$'s (for fixed $\{ X''_k\}$'s), and the Markov's inequality gives
\begin{align}
~&\sum_{n=1}^{\infty}n^{p/r-2} \cdot P_{X}\left(\max_{1\leq k\leq n} E_{\Tilde{\xi}_k}|S''_k(\Tilde{\xi}_k)|^r>\varepsilon  n\right)  \notag \\
&\ll
\sum_{n=1}^{\infty} n^{p/r-3} \sum_{k=1}^n E_X |X_k''|^r 
\ll
\sum_{n=1}^{\infty} n^{p/r-2} \cdot E|Y|^rI(|Y|>n^{1/r}),  \label{T4E2}
\end{align}
the last inequality is due to (MD). \\\\
Furthermore, it can be easily shown (by carefully using the arguments similar to Lemma 3.2 \cite{MR3567927}) that the last series in (\ref{T4E2}) is finite by our assumptions $E|Y|^p < \infty$ for $r<p$, and $E|Y|^p\log^+|Y| < \infty$ for $r=p$.\\\\
Therefore, we obtain 
\begin{align*}
\sum_{n=1}^{\infty}n^{p/r-2} \cdot P_{X}\left(\max_{1\leq k\leq n} E_{\Tilde{\xi}_k}|S_k(\Tilde{\xi}_k)|^r > \varepsilon  n \right) < \infty.   
\end{align*}
This completes the proof.\\
\end{proof}

\begin{proof}[Proof of \textup{Corollary \ref{T4C1}}]
We provide a brief proof, as it mirrors the argument of the previous Corollary \ref{T3C1}.\\\\
Since $E|Y|^p\log^+|Y|< \infty$, taking $r=p$ in Theorem \ref{T4} implies that for any $\varepsilon>0$,
\begin{align}
\sum_{n=1}^{\infty}n^{-1} \cdot P_{X}\left(\max_{1\leq k\leq n} k^{p/2} \cdot E_{\Tilde{\theta}_k}|S_k(\Tilde{\theta}_k)|^p>\varepsilon  n\right) < \infty. \label{T4C1E1}
\end{align}
Then, for any $\delta>0$ and $k\in \mathbb{Z}_+$,
\begin{align*}
P_{X}\left(\underset{m \geq k}{\sup}
\frac{E_{\Tilde{\theta}_m}|S_m(\Tilde{\theta}_m)|^p}{m^{1-p/2}}>\delta \right)
\leq 
\frac{1}{2}
\sum_{n>k}n^{-1} \cdot P_{X}\left(\underset{1 \leq m \leq n}{\max} m^{p/2} \cdot E_{\Tilde{\theta}_m}|S_m(\Tilde{\theta}_m)|^p> \delta n/4\right),
\end{align*}
where the last expression approaches $0$ as $k \rightarrow \infty$ by using (\ref{T4C1E1}). \\\\
This implies that
$$n^{p/2-1} \cdot E_{\Tilde{\theta}_n}|S_n(\Tilde{\theta}_n)|^p \xrightarrow[]{P_{X}-\text{a.s.}} 0 \quad \text{as } \quad  n \rightarrow \infty.$$
\end{proof}


\section{Appendix}
\begin{fact}\label{R2}
Let $\Tilde{\theta}_n$ and $\Tilde{\xi}_n$ be random vectors as defined in Section \ref{S2}.  
Let $Z_{n}$ be a positive random variable such that $Z_{n}^{2} \sim \chi_n^2$, independent of  $\Tilde{\xi}_n$ and $\Tilde{\theta}_n$. 
Then 
$$\Tilde{\xi}_n \overset{d}{=} Z_n \Tilde{\theta}_n.$$
\end{fact}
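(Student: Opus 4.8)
The plan is to combine the rotational invariance of the standard Gaussian vector with the uniqueness of the rotationally invariant probability measure on $\mathbb{S}^{n-1}$. First I would note that $\Tilde{\xi}_n$ has Lebesgue density $(2\pi)^{-n/2}\exp(-\|x\|^2/2)$ on $\mathbb{R}^n$, which depends on $x$ only through $\|x\|$; hence $Q\Tilde{\xi}_n \overset{d}{=} \Tilde{\xi}_n$ for every orthogonal matrix $Q \in O(n)$.

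Since $P(\Tilde{\xi}_n = 0) = 0$, I would pass to the polar decomposition $R := \|\Tilde{\xi}_n\|$ and $U := \Tilde{\xi}_n/\|\Tilde{\xi}_n\| \in \mathbb{S}^{n-1}$, so that $\Tilde{\xi}_n = RU$ almost surely. Because $x \mapsto x/\|x\|$ commutes with every $Q \in O(n)$ and $\|Qx\| = \|x\|$, the invariance above yields $(R, QU) \overset{d}{=} (R, U)$ for all $Q \in O(n)$. In particular the conditional law of $U$ given $R$ is, for almost every value of $R$, a rotationally invariant probability measure on $\mathbb{S}^{n-1}$, hence equals $P_{\Tilde{\theta}_n}$ by uniqueness; this simultaneously shows $U \overset{d}{=} \Tilde{\theta}_n$ and that $U$ is independent of $R$. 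Next I would identify the radial law: $R^2 = \sum_{i=1}^n \xi_{n,i}^2$ is a sum of $n$ independent squares of standard normals, so $R^2 \sim \chi_n^2$ and therefore $R \overset{d}{=} Z_n$.

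Finally, I would assemble the pieces. On one side $\Tilde{\xi}_n = RU$ with $R \perp U$, $R \overset{d}{=} Z_n$, $U \overset{d}{=} \Tilde{\theta}_n$; on the other side $Z_n \perp \Tilde{\theta}_n$ by hypothesis. Since the distribution of a product (scalar)$\,\cdot\,$(point of $\mathbb{S}^{n-1}$) is determined by the two marginal laws once the factors are independent, it follows that $\Tilde{\xi}_n \overset{d}{=} Z_n\Tilde{\theta}_n$. The step that needs the most care is the joint claim that $U$ is uniform on the sphere \emph{and} independent of $R$; this is exactly where uniqueness of the rotationally invariant (Haar) probability measure on $\mathbb{S}^{n-1}$ is used, and everything else is routine bookkeeping. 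An equivalent, more hands-on route is to push the density of $\Tilde{\xi}_n$ forward through the diffeomorphism $(0,\infty)\times\mathbb{S}^{n-1}\to\mathbb{R}^n\setminus\{0\}$, $(r,u)\mapsto ru$, whose Jacobian factorizes as $r^{n-1}\,dr\,d\sigma(u)$ for $\sigma$ the surface measure; the product form $c\,r^{n-1}e^{-r^2/2}\,dr\otimes d\sigma(u)$ of the resulting density then displays the independence and both marginals at once.
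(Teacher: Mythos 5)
Your argument is correct, but it takes a genuinely different route from the paper. The paper proves Fact \ref{R2} by computing characteristic functions: since the characteristic function of $\Tilde{\theta}_n$ is a function $h(\Vert t\Vert)$ of the norm alone and the Gaussian characteristic function is likewise radial, one writes $E(e^{i\langle t,\Tilde{\xi}_n\rangle})=E_{\Tilde{\theta}_n}E_{\Tilde{\xi}_n}(e^{i\Vert t\Vert\langle\Tilde{\theta}_n,\Tilde{\xi}_n\rangle})=E\,h(\Vert t\Vert\,\Vert\Tilde{\xi}_n\Vert)=E\,h(\Vert t\Vert Z_n)$, which is exactly the characteristic function of $Z_n\Tilde{\theta}_n$; no polar decomposition or conditional law ever appears. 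You instead prove the stronger structural statement that $\Vert\Tilde{\xi}_n\Vert$ and $\Tilde{\xi}_n/\Vert\Tilde{\xi}_n\Vert$ are independent with the correct marginals, and then match the two product constructions. Your approach buys more information (the radial--angular independence of the Gaussian, which is reusable), at the cost of one delicate step: deducing from $(R,QU)\overset{d}{=}(R,U)$ for each fixed $Q$ that the conditional law of $U$ given $R$ is rotation-invariant for almost every $R$ \emph{simultaneously} in $Q$ requires either restricting to a countable dense subgroup of $O(n)$ and using continuity, or averaging over Haar measure on $O(n)$ --- the exceptional null set a priori depends on $Q$. You flag this as the sensitive point and, helpfully, your closing polar-coordinates density computation ($c\,r^{n-1}e^{-r^2/2}\,dr\otimes d\sigma(u)$) settles it outright, so the gap is closable within what you wrote. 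The paper's characteristic-function route avoids conditional distributions entirely and is shorter, but yields only the distributional identity it needs.
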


\begin{proof}
Since $\Tilde{\theta}_n \sim$ Unif($\mathbb{S}^{n-1}$), its characteristic function $E_{\Tilde{\theta}_n}(e^{i\langle t,\Tilde{\theta}_n \rangle}) = h(\Vert t \Vert)$, for some function $h(\cdot)$. 
It can also  be noticed that for any $t\in \mathbb{R}^n$, 
$E_{\Tilde{\xi}_n}(e^{i\langle t,\Tilde{\xi}_n \rangle}) = E_{\Tilde{\xi}_n}(e^{i\langle \Vert t \Vert u, \Tilde{\xi}_n \rangle})$, $\forall u \in \mathbb{S}^{n-1}$. Due to these observations, we have 
\begin{align*}
E_{\Tilde{\theta}_n} E_{\Tilde{\xi}_n}(e^{i\langle t,\Tilde{\xi}_n \rangle}) 
= E_{\Tilde{\theta}_n} E_{\Tilde{\xi}_n}(e^{i\langle \Vert t \Vert \Tilde{\theta}_n, \Tilde{\xi}_n \rangle})
= E_{\Tilde{\xi}_n} (h(\Vert t \Vert \Vert \Tilde{\xi}_n \Vert)),
\end{align*}
where $E_{\Tilde{\xi}_n} (h(\Vert t \Vert \Vert \Tilde{\xi}_n \Vert)) = E_{Z_n} (h(\Vert t \Vert Z_n))$, because  $\Vert \Tilde{\xi}_n \Vert^2  \overset{d}{=} Z_{n}^{2}$. Thus, we have
\begin{align*}
E_{\Tilde{\xi}_n}(e^{i\langle t,\Tilde{\xi}_n \rangle}) 
= E_{Z_n} (h(\Vert t \Vert Z_n))
= E_{Z_n} E_{\Tilde{\theta}_n}(e^{i\langle t Z_n, \Tilde{\theta}_n \rangle})
=E_{Z_n,\Tilde{\theta}_n}(e^{i\langle t, Z_n \Tilde{\theta}_n \rangle}).
\end{align*}
This concludes the proof.\\
\end{proof}

\begin{fact}\label{R1}
Let $n\in \mathbb{Z}_+$ and $s>0$. If $Z\sim \chi_n^2$, then $EZ^s= \frac{2^s \cdot \Gamma(\frac{n+2s}{2})}{\Gamma{(\frac{n}{2})}}$.
\end{fact}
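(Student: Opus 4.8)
The plan is to compute $EZ^s$ directly from the density of a chi-square random variable. Recall that if $Z\sim\chi_n^2$, then $Z$ is supported on $(0,\infty)$ with density $f_Z(z)=\frac{1}{2^{n/2}\,\Gamma(n/2)}\,z^{n/2-1}e^{-z/2}$. Hence $EZ^s=\int_0^\infty z^s f_Z(z)\,dz=\frac{1}{2^{n/2}\,\Gamma(n/2)}\int_0^\infty z^{\,n/2+s-1}e^{-z/2}\,dz$, and this integral is finite: near $0$ the integrand behaves like $z^{n/2+s-1}$ with exponent $n/2+s-1>-1$ (since $n\geq 1$ and $s>0$), and the exponential factor controls the tail at $\infty$.

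Next I would evaluate the integral by the substitution $u=z/2$, so that $z=2u$ and $dz=2\,du$. This turns the integral into $\int_0^\infty (2u)^{\,n/2+s-1}e^{-u}\cdot 2\,du = 2^{\,n/2+s}\int_0^\infty u^{\,n/2+s-1}e^{-u}\,du = 2^{\,n/2+s}\,\Gamma\!\big(\tfrac{n}{2}+s\big)$, using the definition of the Gamma function. Substituting back yields $EZ^s=\dfrac{2^{\,n/2+s}\,\Gamma(\tfrac{n}{2}+s)}{2^{n/2}\,\Gamma(\tfrac{n}{2})}=\dfrac{2^{s}\,\Gamma(\tfrac{n+2s}{2})}{\Gamma(\tfrac{n}{2})}$, which is the claimed identity. (Equivalently, one may note that $\chi_n^2$ is the Gamma distribution with shape $n/2$ and scale $2$ and quote the standard moment formula for the Gamma family, but this reduces to the same elementary integral.)

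There is no real obstacle in this argument; it is a routine Gamma-integral evaluation. The only point deserving an explicit remark is the justification that the defining integral converges, which is exactly the condition $n/2+s-1>-1$ noted above, i.e. it holds for every $n\in\mathbb{Z}_+$ and every $s>0$, precisely the range in which the statement is asserted. In particular, taking $s=p/2$ recovers the formula $E_{Z_n}|Z_n|^p = \frac{2^{p/2}\Gamma(\frac{n+p}{2})}{\Gamma(n/2)}$ used in the proof of Theorem \ref{T1}.
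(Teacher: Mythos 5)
Your proof is correct and follows essentially the same route as the paper: both compute $EZ^s$ directly from the $\chi_n^2$ density and evaluate the resulting Gamma integral (the paper recognizes the integrand as a $\chi^2_{n+2s}$ density rather than substituting $u=z/2$, but this is the same calculation). Your added remark on convergence of the integral is a harmless bonus.
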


\begin{proof}
The pdf of $\chi_n^2$ is given by
\begin{equation*}
  f_{\chi_n^2}(x;n) =
    \begin{cases}
     \frac{x^{n/2-1} \cdot e^{-x/2}}{2^{n/2}\cdot \Gamma(n/2)},  & x>0;\\
     0,  & \text{otherwise.}
    \end{cases}       
\end{equation*}
Then,
\begin{align*}
EZ^s
= \int_{0}^{\infty} x^s \cdot \frac{x^{n/2-1} \cdot e^{-x/2}}{2^{n/2}\cdot \Gamma(n/2)} ~dx 
&= \frac{2^s\cdot \Gamma(\frac{n+2s}{2})}{\Gamma{(\frac{n}{2})}} \int_{0}^{\infty}  \frac{x^{(n+2s)/2-1}\cdot e^{-x/2}}{2^{(n+2s)/2}\cdot \Gamma(\frac{n+2s}{2})} ~dx \\
=\frac{2^s\cdot \Gamma(\frac{n+2s}{2})}{\Gamma{(\frac{n}{2})}}.
\end{align*}
\end{proof}

\section*{Acknowledgments}
The research was supported by the NSF Grant No.  DMS-2054598.
The author is grateful to Magda Peligrad for providing the insightful guidance in the completion of this work.

\bibliographystyle{amsplain}
\bibliography{ref2}

\end{document}